\newtheorem{THM}{Theorem}
\newtheorem{thm}{Theorem}[section]
\newtheorem{lem}[thm]{Lemma}
\newtheorem{corol}[thm]{Corollary}
\newtheorem*{thm*}{Theorem}
\newtheorem*{cnj*}{Conjecture}
\theoremstyle{definition}
\newtheorem{rmk}[thm]{Remark}
\newtheorem{eg}[thm]{Example}
\newtheorem*{conj*}{Conjecture}
\newcommand{\sO}{\mathscr{O}}
\newcommand{\sF}{\mathscr{F}}
\newcommand{\cF}{\mathcal{F}}
\newcommand{\cI}{\mathcal{I}}
\DeclareMathOperator{\HH}{H}
\DeclareMathOperator{\hh}{h}
\newcommand{\C}{\mathbb C}
\newcommand{\N}{\mathbb N}
\newcommand{\p}{\mathbb P}
\newcommand{\PD}{\check{\mathbb P}}
\newcommand{\mult}{\mathrm{mult}}
\begin{document}



\title{On invariant rank two vector bundles on $\mathbb{P}^2$}

\author{Simone Marchesi} 
\email{{\tt marchesi@ub.edu}}
\address{Universitat de Barcelona\\
  Gran Via de les Corts Catalanes, 585 - 08007 Barcelona - Spain}

\author{Jean Vallès}
\email{{\tt jean.valles@univ-pau.fr}}
\address{Université de Pau et des Pays de l'Adour \\
  Avenue de l'Université - BP 576 - 64012 PAU Cedex - France}
\urladdr{\url{http://jvalles.perso.univ-pau.fr/}}

\keywords{Invariant bundles, Logarithmic sheaves}
\subjclass[2010]{14F05, 14L30}

\thanks{Both authors were partially supported by CAPES-COFECUB project 926/19.}

\begin{abstract} 
In this paper we characterize the rank two vector bundles  on $\p^2$ which are invariant under the actions of  the parabolic subgroups
 $G_p:=\mathrm{Stab}_p(\mathrm{PGL}(3))$ fixing a point in the projective plane,  $G_L:=\mathrm{Stab}_L(\mathrm{PGL}(3))$ fixing a line, 
and when $p\in L$,  the Borel subgroup $\mathbf{B} = G_p \cap G_L$ of $\mathrm{PGL}(3)$.
Moreover, we prove that the geometrical configuration of the jumping locus induced by the invariance does not, on the other hand, 
characterize the invariance itself. Indeed, we find infinite families that are \textit{almost uniform} but not \textit{almost homogeneous}.
\end{abstract}

\maketitle

\section{Introduction}
The description and classification of vector bundles, which are invariant under the action of a determined group,
has been widely studied. For instance, rank $r$ vector bundles over $\p^n$ which are invariant under the canonical action of $\mathrm{PGL}(n+1,\mathbb{C})$ 
are called \textit{homogeneous}. Their complete classification is known up to rank $n+2$, see \cite{ElliaUnif} for a reference, and they are given only by direct 
sums involving line bundles, a twist of the tangent bundle, a twist of the cotangent bundle on $\p^n$ or their symmetric or anti-symmetric powers.

Furthermore, particular situations induce us to consider the action of specific subgroups of the projective linear group. For example, 
Ancona and Ottaviani prove in \cite{AO-Unstable} that the Steiner bundles on $\p^n$ which are invariant under the action of the special linear group $\mathrm{SL}(2,\mathbb{C})$
are the ones introduced by Schwarzenberger in \cite{Schw-vbundles}. Further in this direction, the second named author proves
in \cite{Valles} that any rank 2 stable vector bundle on $\p^2$ which is invariant under the action of $\mathrm{SL}(2,\mathbb{C})$ is a Schwarzenberger bundle.

In this paper we will consider rank 2 vector bundles on the projective plane $\p^2$, and the chosen subgroups of $\mathrm{PGL}(3)=\mathrm{PGL}(3,\mathbb{C})$
have been inspired by the following observations.\\
In a previous paper, see \cite{MV}, both authors have studied \textit{nearly free vector bundles} coming from line arrangements.\\ 
First of all, recall that it is of great interest also the studying and description of the action of a group on a hyperplane arrangement. For example, hyperplane arrangements
which are invariant under the action of the group defined by reflections are free, and therefore their associated vector bundle is a direct sum of line bundles and hence homogeneous 
(see \cite{OrlikTerao-Arrangements} for more details).\\
 Recall moreover that nearly free vector bundles $\mathcal{F}$, which were introduced by Dimca and Sticlaru in \cite{DimcaSticlary-NF}, can be defined by the short exact sequence
 \begin{equation}
\label{nf}
0\longrightarrow \sO_{\p^2}(-b-1) \stackrel{M}{\longrightarrow} \sO_{\p^2}(-a) \oplus \sO_{\p^2}(-b) ^2 \longrightarrow \mathcal{F} \longrightarrow 0,
 \end{equation}
with $(a,b)\in \mathbb{N}^2$ called the exponents of the vector bundle.\\
In particular, we proved the following two results
\begin{itemize}
\item Let $\mathcal{F}$ be a nearly free vector bundle. Then, there exists a point $p$ such that a line $l\in \p^2$ is a jumping line of $\mathcal{F}$
if and only if $l$ passes through $p$. Moreover, each jumping line has order of jump equal to $1$. We call $p$ the \textit{jumping point} associated to the vector bundle.
\item Given a point $p \in \p^2$ and a couple of integers $(a,b)\in \mathbb{C}^2$ with $ a\le b$, there exists, up to isomorphism, one and only one nearly free vector bundle
with exponents $(a,b)$ whose pencil of jumping lines has $p$ as base point. Moreover, we can think of its defining matrix $M$ as
$$
{}^tM = [x,y,z^{b-a+1}].
$$
\end{itemize}
Furthermore, we proved that the geometrical configuration of the jumping locus $S(\mathcal{F})$, described in the first item, 
``almost'' characterises nearly free vector bundles (see \cite[Theorem 2.8]{MV}).\\ 

Inspired by the essential nature of the jumping point $p$, we focus on the rank 2 vector bundles on $\p^2$ which are invariant under the action of the subgroup $G_p \subset \mathrm{PGL}(3)$ that 
fixes the point $p$ in the projective plane. 

Assume that $p=(1:0:0)$.  The isotropy groups $G_p$ and $G_L$, that fixes the line $L=\{z=0\}$, are maximal parabolic subgroups containing the Borel subgroup $\mathbf{B} = G_p \cap G_L$ of upper triangular matrices fixing $p$ and $L$, with $p\in L$. So the question of invariance under the action of $G_p$ naturally extends to $G_L$ and $\mathbf{B}$.

The Borel subgroup $\bf{B}$ of upper triangular matrices contains the maximal torus $\bf{T}$ of diagonal matrices. As it was kindly pointed out by the referee, the classification of indecomposable rank two $\bf{T}$-invariant bundles has been done by Kaneyama in \cite{KaneyamaP2} (see also \cite{KaneyamaPn} for a generalization in higher dimension). Specifically, he proves that a $\bf{T}$-invariant vector bundle is isomorphic to a twist of the vector bundle $E(a,b,c)$ (where $a,b,c$ are positive integers) defined by 
\begin{equation}\label{bundle-torusinvariant}
0\longrightarrow   \sO_{\p^2}\stackrel{(x^a,y^b,z^c)}\longrightarrow  \sO_{\p^2}(a) \oplus \sO_{\p^2}(b)\oplus \sO_{\p^2}(c)\longrightarrow  E(a,b,c) \longrightarrow  0.
\end{equation}

Since $\mathbf{T}\subset \mathbf{B}=G_p\cap G_L$ the indecomposable bundles invariant under the action of either $\bf{B}$, $G_p$ or $G_L$ belong to the family studied by Kaneyama. Furthermore, a nearly free vector bundle defined by the exact sequence (\ref{nf}) is isomorphic to $E(1,1,b-a+1)\otimes \sO_{\p^2}(-b-1)$.

We deal first with the $\mathbf{B}$-invariant case, deducing the others from this one. The results obtained can be concentrated in the following statement. 
\begin{THM}
Let $\cF$ be an indecomposable rank two vector bundle on $\p^2$. Then
\begin{itemize}
\item $\cF$ is invariant under the action of $G_p$ if and only if it is a nearly free vector bundle with jumping point $p$;
\item $\cF$ is invariant under the action of $G_L$ if and only if it is homogeneous;
\item $\cF$ is invariant under the action of $\mathbf{B}$ if and only if it is a nearly free vector bundle with jumping point $p$.
\end{itemize}
\end{THM}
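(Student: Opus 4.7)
Following the paper's outline, the plan is to prove the $\mathbf{B}$-invariant case first and to deduce the $G_p$ and $G_L$ cases from it. Since $\mathbf{T}\subset \mathbf{B}\subset G_p,G_L$, any indecomposable rank-two bundle invariant under any of these three groups is in particular $\mathbf{T}$-invariant, so by Kaneyama's theorem $(\ref{bundle-torusinvariant})$ it is isomorphic to $E(a,b,c)\otimes \sO_{\p^2}(d)$ for some positive integers $a,b,c$ and some $d\in\Z$. The real content is then to determine which triples $(a,b,c)$, with the specified assignment of exponents to coordinates $(x,y,z)$, yield bundles still invariant under the unipotent part of $\mathbf{B}$ (resp.\ $G_p,G_L$).

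\textbf{The $\mathbf{B}$-case by comparison of sections.} The key observation is that $\mathrm{Ext}^1(V,\sO_{\p^2})=0$ with $V:=\sO_{\p^2}(a)\oplus \sO_{\p^2}(b)\oplus \sO_{\p^2}(c)$ (since all line bundles on $\p^2$ have $H^1=0$), so two sections $\sigma,\sigma'\in H^0(V)$ have isomorphic cokernels if and only if $\sigma'=\lambda A\sigma$ for some $A\in \mathrm{Aut}(V)$ and $\lambda\in \C^*$. Hence $E(a,b,c)$ is $\mathbf{B}$-invariant precisely when, for every $g\in\mathbf{B}$, the pulled-back section $((g\cdot x)^a,(g\cdot y)^b,(g\cdot z)^c)$ lies in the $\mathrm{Aut}(V)\times \C^*$-orbit of $\sigma=(x^a,y^b,z^c)$. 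A case analysis---using that $A_{ij}\in H^0(\sO_{\p^2}(\deg_i-\deg_j))$ and that $\mathbf{B}$ preserves the flag $\langle z\rangle\subset\langle y,z\rangle\subset H^0(\sO_{\p^2}(1))$---shows that if $b>1$, the expansion $(\delta y+\epsilon z)^b$ produces a mixed term $y^{b-1}z$ which no $A\sigma$ can match, because the relevant entries of $A$ are forced to vanish by the degree constraints; similarly when $c>1$. Only $b=c=1$ survives, and then $A_{12},A_{13}\in H^0(\sO_{\p^2}(a-1))$ have exactly the room needed to absorb the expansion of $(g\cdot x)^a$ (the point being that every degree-$a$ monomial other than $x^a$ lies in the ideal $(y,z)$), while a constant $2\times 2$ block handles the $\sO_{\p^2}(1)\oplus\sO_{\p^2}(1)$ summand. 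Thus the $\mathbf{B}$-invariant Kaneyama bundles are exactly $E(a,1,1)\otimes\sO_{\p^2}(d)$, which by the final remark of the introduction are the nearly free bundles with jumping point $\{y=z=0\}=p$.

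\textbf{The $G_p$ and $G_L$ cases.} For the ``if'' directions of the first and third bullets it is cleaner to invoke the M--V uniqueness result quoted in the introduction: given a nearly free $\cF$ with exponents $(a,b)$ and jumping point $p$, for any $g\in G_p$ the pullback $g^*\cF$ is again nearly free with the same Chern classes and the same pencil of jumping lines (since $g$ fixes $p$), hence is isomorphic to $\cF$. Combined with the $\mathbf{B}$-case analysis, this settles both bullets. For the second bullet, $\mathbf{B}\subset G_L$ already forces any $G_L$-invariant indecomposable $\cF$ to be of the form $E(a,1,1)\otimes\sO_{\p^2}(d)$; but a generic element of $G_L$ does not fix $p$, so if $a>1$ then $g^*\cF$ would be nearly free with a different jumping point, contradicting $G_L$-invariance. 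Hence $a=1$, forcing $\cF$ to be a twist of $E(1,1,1)=\cT_{\p^2}$, i.e.\ homogeneous; the converse is immediate, since every homogeneous bundle is $\mathrm{PGL}(3)$-invariant.

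\textbf{Main obstacle.} The hardest step is the case analysis that distinguishes the Kaneyama slot $E(a,1,1)$ from the seemingly symmetric $E(1,a,1)$ and $E(1,1,a)$: although related by coordinate permutations (which lie outside $\mathbf{B}$), they have pencils of jumping lines through different coordinate points and are therefore genuinely non-isomorphic bundles, only one of which is $\mathbf{B}$-invariant. Carrying out the bookkeeping of the homogeneous degrees of the entries of $\mathrm{Aut}(V)$ against the polynomial expansions produced by the $\mathbf{B}$-action is precisely where the asymmetry of $\mathbf{B}$ in $x,y,z$ enters.
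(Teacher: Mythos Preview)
Your argument is correct and follows essentially the same route as the paper: reduce to Kaneyama's $E(a,b,c)$ via $\mathbf{T}\subset\mathbf{B}$, use the lift-to-$\mathrm{Aut}(V)$ criterion (which you justify cleanly via $\mathrm{Ext}^1(V,\sO_{\p^2})=0$; the paper simply points back to the computation in Lemma~\ref{NF-invariant}) to determine the admissible triples under $\mathbf{B}$, and then deduce the $G_p$ and $G_L$ statements from $\mathbf{B}=G_p\cap G_L$. Two minor differences are worth recording: for the ``if'' direction of $G_p$-invariance you appeal to the uniqueness theorem of \cite{MV} rather than the paper's explicit matrix computation, and you keep track of the coordinate assignment throughout (concluding $b=c=1$, high power on $x$), which is in fact a bit more careful than the paper's ``without loss of generality $a\le b\le c$'' followed by the single test element $(x,y,z)\mapsto(x+y,y,z)$, since $E(a,1,1)$ and $E(1,1,a)$ have distinct jumping points and are not isomorphic over $\mathrm{id}_{\p^2}$.
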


Finally, in Section \ref{sec-specialconf}, we investigate a little more deeply the relation, for a rank $2$
vector bundle $\mathcal{F}$ on $\p^2$, between the invariance for the action of a given group and the geometrical configuration of its jumping locus. In this direction, recall that if we consider the whole PGL(3), 
hence $\cF$ to be homogeneous, then all the lines $L \subset \p^2$ induce the same splitting type. Vector bundles 
satisfying such property are called \textit{uniform}. \\
It is known, due to the work of many (see for example \cite{Ballico, Elenc1, EHS, ElliaUnif, Sato, VandeVen}), that every uniform vector bundle on $\p^n$ with rank $r \leq n+1$ is also homogeneous. On the other hand, it has been of interest to find examples of uniform vector bundles, which are not homogeneous, of the lowest possible rank, see for example \cite{Elenc-UnifNoHom, Drezet, MMR} and \cite[Thm 3.3.2]{OSS}.\\
We will observe that the  equivalence between the invariance and the jumping locus is already broken for the rank 2 case, 
and we will provide two examples of infinite families of vector bundles which are \textit{almost uniform},
i.e. whose jumping locus is given by all the lines passing through a fixed point $p$, all having the same order of jump. At the same time, the obtained examples are not \textit{almost homogeneous}, i.e. they are not invariant under the action of the considered groups. Notice that our definition of almost uniform vector bundles may differ from the one considered by other authors, where it means to have a finite number of jumping lines (see for example \cite{ElliaJump}).\\

\noindent\textbf{Acknowledgements.} We wish to thank the referee for suggesting major improvements to
the paper and pointing out the references \cite{KaneyamaP2,KaneyamaPn}.

\section{Action of $G_p$, $G_L$ and $\bf{B}$}
 We consider now the subgroup 
$G_p=\mathrm{Stab}_p(\mathrm{PGL}(3,\C))$  that fixes a point $p\in \p^2$,  the subgroup $G_L=\mathrm{Stab}_p(\mathrm{PGL}(3,\C))$  that fixes a line $L \subset \p^2$ and, 
when $p\in L$, we consider also the subgroup defined by the intersection $\mathbf{B}=G_p\cap G_L$. Denote by $p^\vee$ and $L^\vee$, respectively, the associated line and point in the dual projective plane. In order to have a good description of the matrices representing the elements of the considered groups, let us choose the point $p=(1:0:0)$ and the line $L=\{z=0\}$ in $\p^2$.

\smallskip

In this section we describe the action of these three subgroups of $\mathrm{PGL}(3)$.\\
First of all, notice that they can be described as subgroups of matrices in the following way:
$$G_p=\{ \left[
\begin{array}{ccc}
1 & * & * \\
0 & a & b \\
0 & c & d
\end{array}
\right],\,\, ad-bc\neq 0 \},\,\,
G_L=\{ \left[
\begin{array}{ccc}
a & b & * \\
c & d & * \\
0 & 0 & 1
\end{array}
\right],\,\,  ad-bc\neq 0 \} \,\, \mathrm{ and } $$
$$\mathbf{B}=\{ \left[
\begin{array}{ccc}
a & * & * \\
0 & b & * \\
0 & 0 & c
\end{array}
\right],\,\, abc\neq 0 \}.$$ 

\subsection{Action of $G_p$}\label{sec-GP}
First of all, let us describe how the group $G_p$ acts on the points and lines of the projective plane.
\begin{lem}
\label{Lem1}
 The group $G_p$ acts transitively on the following sets: 
 \begin{enumerate}
  \item  points of $\p^2\setminus \{p\}$,
  \item  lines $L$ such that  $p\in L$,
  \item  lines $L$ such that $p\notin L$.
 \end{enumerate}
\end{lem}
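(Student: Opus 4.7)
The plan is to prove each of the three claims by exhibiting, for any target object of the specified type, an explicit element of $G_p$ that carries a fixed reference object to the target; since $G_p$ is a group, this yields transitivity. I would choose as references the point $q_0 = (0:0:1)$ for (1), the line $L_0 = \{x_2 = 0\}$ (which contains $p$) for (2), and the line $L_0' = \{x_0 = 0\}$ (which does not) for (3).

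The key observation is that $G_p$ contains the lower-right $\GL(2,\C)$ block (with first row equal to $(1,0,0)$), which acts transitively on $\C^2 \setminus \{0\}$, together with all shears of the form
\[
\begin{pmatrix} 1 & \mu & \nu \\ 0 & 1 & 0 \\ 0 & 0 & 1 \end{pmatrix}.
\]
For (1), given $q = (x_0:x_1:x_2)$ with $(x_1,x_2) \ne (0,0)$, an element of the $\GL(2,\C)$ block sends $q$ to a point of the form $(x_0':0:1)$, and a subsequent shear with $\mu = 0$ and $\nu = -x_0'$ lands at $q_0$. For (2), a line through $p$ has equation $\alpha x_1 + \beta x_2 = 0$ with $(\alpha,\beta) \ne (0,0)$, and the induced action of the $\GL(2,\C)$ block on the dual pencil $p^\vee \cong \p^1$ is again transitive, so it carries $L_0$ to any such line.

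For (3), any line not through $p$ has, after rescaling, an equation $x_0 + \mu x_1 + \nu x_2 = 0$ for some $(\mu,\nu) \in \C^2$, and the matrix
\[
g = \begin{pmatrix} 1 & -\mu & -\nu \\ 0 & 1 & 0 \\ 0 & 0 & 1 \end{pmatrix} \in G_p
\]
sends $L_0'$ to it: a point $(0:y:z) \in L_0'$ is mapped to $(-\mu y - \nu z : y : z)$, which satisfies the target equation. The whole argument is essentially linear-algebraic bookkeeping; the only mild subtlety is tracking the action on lines via the dual representation, but the semidirect-product structure of $G_p$, with $\GL(2,\C)$ acting on a two-dimensional group of translations along $p^\vee$, makes the three orbits fall out immediately, and no step presents a real obstacle.
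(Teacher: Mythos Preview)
Your argument is correct. Each of the three transitivity claims is established by exhibiting explicit elements of $G_p$ carrying a fixed reference object to an arbitrary target, exploiting the semidirect-product structure $G_p \cong \C^2 \rtimes \GL(2,\C)$. The computations are accurate; in particular, in (3) the image $g(L_0')$ is indeed the target line, since $g$ is an automorphism mapping lines to lines and you have checked the inclusion.

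The paper takes a different, more abstract route: it invokes the classical fact that $\mathrm{PGL}(3)$ acts transitively on quadruples of points in general position. From this, $G_p$ is (at least) transitive on $\p^2\setminus\{p\}$, and transitivity on the two families of lines is deduced from the induced transitivity on pairs of lines. Your approach is more elementary and entirely self-contained, and it mirrors exactly the style the paper itself adopts for the analogous lemmas about $G_L$ and $\mathbf{B}$ (where explicit matrices are written down). The paper's approach is terser but relies on a known global transitivity property of $\mathrm{PGL}(3)$; yours makes the structure of $G_p$ visible and would be the natural choice if one wanted a uniform treatment of all three subgroups.
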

\begin{proof}
 It is clear that the action of $G_p$ on these three sets is well defined. We would like to prove that these actions are transitive. In order to prove item (1), we recall that $G$ acts transitively on 
 the set of quadruples of points of $\p^2$, hence the subgroup $G_p$ acts transitively on the set of triples of $\p^2\setminus \{p\}$. This $4$-transitivity of $G$ implies that  $G$ acts transitively on the pair of lines, which moreover implies the transitivity of the action of $G_p$ on both sets of lines, proving the last two items.
\end{proof}

\subsection{Action of $G_L$}\label{sec-GL}
Let us focus now on the subgroup $G_L$.

\begin{lem}\label{lem-tranGL}
 The group $G_L$ acts transitively on the following sets: 
 \begin{enumerate}
  \item  points of $\p^2\setminus L$,
  \item  points of the line $L$,
  \item pairs of points $\{(p_1,p_2)\in (\p^2\setminus L)^2\,|\, p_1\neq p_2\}.$
 \end{enumerate}
\end{lem}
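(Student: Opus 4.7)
The plan is to exhibit concrete subgroups of $G_L$ that realise the required transitive actions, exploiting the fact that $G_L$ splits naturally into an affine-translation part (the last column entries) and a $\mathrm{GL}(2)$-part (the upper-left $2\times 2$ block).

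For part (1), I would restrict attention to the subgroup of ``translations'' inside $G_L$, namely
\[
T = \left\{ \begin{pmatrix} 1 & 0 & s \\ 0 & 1 & t \\ 0 & 0 & 1\end{pmatrix} : s,t \in \C \right\} \subset G_L.
\]
Points of $\p^2 \setminus L$ are exactly those of the form $(x:y:1)$, and $T$ acts on them as $(x,y)\mapsto (x+s,y+t)$, i.e.\ as the additive group $\C^2$ on itself, which is obviously transitive.

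For part (2), I would use the complementary subgroup
\[
H = \left\{ \begin{pmatrix} a & b & 0 \\ c & d & 0 \\ 0 & 0 & 1\end{pmatrix} : ad-bc \ne 0 \right\} \cong \mathrm{GL}(2,\C).
\]
Points of $L$ are of the form $(x:y:0)$, and $H$ acts on them by the standard linear action of $\mathrm{GL}(2,\C)$ on $\C^2\setminus\{0\}$; passing to $\p^1$, this action is (even sharply $3$-)transitive, which gives (2) immediately.

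For part (3), I would combine the two steps. Given two pairs $(p_1,p_2)$ and $(p_1',p_2')$ in $(\p^2\setminus L)^2$ with $p_1\ne p_2$ and $p_1'\ne p_2'$, first use part (1) to produce an element of $G_L$ sending $p_1$ to $p_1'$, so it suffices to show that the stabilizer $\mathrm{Stab}_{G_L}(p_1')$ acts transitively on $(\p^2\setminus L)\setminus\{p_1'\}$. Using (1) again, I can assume $p_1'=(0:0:1)$; then an easy computation shows $\mathrm{Stab}_{G_L}((0:0:1))=H$, and the $H$-action on the remaining affine points $(x:y:1)$ is $(x,y)\mapsto (ax+by,cx+dy)$, i.e.\ the standard $\mathrm{GL}(2,\C)$-action on $\C^2\setminus\{0\}$, which is transitive. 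This yields the desired $2$-transitivity.

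There is no real obstacle here; the lemma is essentially a calculation in matrix groups. The only thing worth noting is that the splitting of $G_L$ as a semidirect product of $T$ (translations) and $H\cong\mathrm{GL}(2,\C)$ encodes all the information, and the three statements correspond respectively to the transitivity of $T$ on the affine chart, of $H$ on $L$, and of $H$ on the punctured affine chart.
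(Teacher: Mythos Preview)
Your proof is correct and follows essentially the same approach as the paper: exhibiting explicit matrices in $G_L$ that realise the required transitivity. Your presentation is slightly more structured---you isolate the translation subgroup $T$ and the $\mathrm{GL}(2)$-block $H$ and phrase everything through the semidirect product $G_L \cong T \rtimes H$, and for (3) you argue via the stabiliser of a point rather than writing down a single matrix---but the underlying content is the same as the paper's direct matrix computations.
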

\begin{proof}
All items can be proven directly choosing appropriate matrices.\\
To prove the first one, notice that the matrix 
  $$ \left[
\begin{array}{ccc}
1 & * & a \\
0 & 1 & b \\
0 & 0 & c
\end{array}
\right]$$
sends the point $(0:0:1)$ to any point $(a:b:c)$ with $c\neq 0$.

To prove the second one, notice that the matrix 
$$ \left[
\begin{array}{ccc}
a & b & * \\
c & 1 & * \\
0 & 0 & 1
\end{array}
\right]$$
sends the point $(1:0:0)$ to any point $(a:c:0)$. Observe that if $a=0$, we ask that $b \neq 0$.

The matrix  $$\left[
\begin{array}{ccc}
a &c &  0 \\
b & d& 0 \\
0 &0 & 1
\end{array}
\right]$$ sends  the pair of points $\{(1:0:1),(0:1:1)\}$ to any other pair 
$\{(a:b:1),(c:d:1)\}$.
\end{proof}

\subsection{Action of $\mathbf{B}$}\label{sec-T}
As done for the previous groups, let us prove the transitivity properties of $\mathbf{B}$ that will be needed.

\begin{lem}\label{invariant-t}
 The group $\mathbf{B}$ acts transitively on the following sets: 
 \begin{enumerate}
  \item  $L\setminus \{p\}$ in the projective plane,
  \item  $\check{\p^2}\setminus \{p^{\vee}\}$ in the dual projective plane.
  \item  $ \{p^{\vee}\}\setminus \{L^{\vee}\}$ in the dual projective plane.
 \end{enumerate}
\end{lem}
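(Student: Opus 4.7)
The plan is to prove all three transitivity statements by exhibiting explicit families of matrices in $\mathbf{B}$, in the same spirit as Lemmas~\ref{Lem1} and~\ref{lem-tranGL}. With $p=(1:0:0)$ and $L=\{z=0\}$, the dual line $p^{\vee}\subset\check{\p^2}$ (the pencil of lines through $p$) is cut out by the vanishing of the first homogeneous coordinate, and $L^{\vee}=(0:0:1)$. The action of $\mathbf{B}$ on $\check{\p^2}$ is the contragredient one, so an upper triangular $M\in\mathbf{B}$ acts on $\check{\p^2}$ via $(M^{-1})^t$, which is lower triangular.

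For item~(1), every point of $L\setminus\{p\}$ has homogeneous coordinates $(t:1:0)$; I would use the unipotent element of $\mathbf{B}$ with $(1,2)$-entry equal to $t_2-t_1$ to move $(t_1:1:0)$ to $(t_2:1:0)$. For item~(2), I would normalise a representative of $\check{\p^2}\setminus p^{\vee}$ to $(1:\beta:\gamma)$, corresponding to the line $x+\beta y+\gamma z=0$, and then check that an upper triangular matrix whose strictly upper entries are read off from $\beta$ and $\gamma$ sends the reference line $\{x=0\}$ to the desired line. For item~(3), I would normalise a representative of $p^{\vee}\setminus\{L^{\vee}\}$ to $(0:1:\gamma)$ (a line through $p$ other than $L$), and verify that a unipotent matrix with $(2,3)$-entry equal to $-\gamma$ sends $\{y=0\}$ to the prescribed line.

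There is no genuine obstacle here; the content is purely bookkeeping about the contragredient action. The only point requiring some care is the compatibility between the actions on $\p^2$ and on $\check{\p^2}$: the upper triangular shape of $\mathbf{B}$ becomes a lower triangular action on the dual plane, which explains why $\mathbf{B}$ simultaneously stabilises $p^{\vee}$ and $L^{\vee}$ while still acting transitively on each of the complements considered in items (2) and (3).
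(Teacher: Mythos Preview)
Your proposal is correct and follows essentially the same approach as the paper: both arguments exhibit explicit upper triangular matrices sending a fixed base point (or base line) to an arbitrary element of the orbit, with the same normalisations $(u:1:0)$, $\{x=0\}\to\{x+\beta y+\gamma z=0\}$, and $\{y=0\}\to\{y+\gamma z=0\}$. The only cosmetic difference is that you phrase items~(2) and~(3) via the contragredient action $(M^{-1})^{t}$ on $\check{\p^2}$, while the paper verifies directly that the image of each point on the reference line satisfies the target equation; these are the same computation.
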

\begin{proof}
To prove the first item, it is sufficient to observe that any point $(u:1:0)\in L\setminus \{p\}$ is the image of $(0:1:0)$ by $$ \left[
\begin{array}{ccc}
1 &u &  0\\
0 & 1& 0\\
0 &0 & 1
\end{array}
\right].$$
To prove the second item, it is enough to show that the line $x=0$ can be sent to any line $L_{v,w}=\{x=vy+wz\}$ by an element of the group $\mathbf{B}$.
Indeed the matrices ($c\in \C$)
$$ \left[
\begin{array}{ccc}
1 &v & cv+w\\
0 & 1& c\\
0 &0 & 1
\end{array}
\right]$$ send the point $(0:\alpha:\beta)$ to $(\alpha v+\beta (cv+w):\alpha +c\beta: \beta)\in L_{v,w}.$ 

\medskip

To prove the third item, we have to show that any line $y+wz=0$ can be sent to another line of the same type, i.e. $y+w^{'}z=0$. It is enough to show that 
$y=0$ can be sent to the line $y+wz=0$, for any $w\in \C$.\\ 
The matrices of type
$$ \left[
\begin{array}{ccc}
1 &a& b\\
0 & 1& -w\\
0 &0 & 1
\end{array}
\right],$$
 for $(a,b)\in \C$, are the required ones.
\end{proof}

\section{Nearly free vector bundles}
In this section we will focus on the family of nearly free vector bundles, proving that they are invariant for the action of the groups $G_p$ and $\mathbf{B}$, but not for the action of $G_L$.\\

Consider the family of rank two vector bundles $\mathcal{E}_n$ parametrized by the positive integers $n\in \N^*$ and defined by the following short exact sequence:
$$
\begin{CD}
 0@>>>  \sO_{\p^2}(-n) @>(y,z,x^n)>> \sO_{\p^2}^2(1-n) \oplus \sO_{\p^2}@>>> \mathcal{E}_n @>>>  0.
\end{CD}
$$
These bundles belong to the family of \textit{nearly free vector bundles}, first introduced in \cite{DimcaSticlary-NF}. They belong also to the family of indecomposable $\mathbf{T}$ invariant bundles described by  Kaneyama in \cite{KaneyamaP2}, indeed they verify 
$\mathcal{E}_n=E(1,1,n)$. Notice that when $n=1$ we have $\mathcal{E}_1=T_{\p^2}(-1)$, when $n=2$ the bundle $\mathcal{E}_2$ is semi-stable and when $n\ge 3$ the bundle $\mathcal{E}_n$ is unstable, because in this case $c_1(\mathcal{E}_n)<0$ and $\HH^0(\mathcal{E}_n)\neq 0$.\\
In \cite{MV}, we have proven that, fixing a point $p\in \p^2$ and supposing that $p=(1:0:0)$ up to a change of coordinates, 
the point $p$ determines any nearly free vector bundle up to isomorphism and they are all described by the same exact sequence as the one defining $\mathcal{E}_n$. 
The point $p$ has been denominated \textit{jumping point}, indeed a line is a jumping line for $\mathcal{E}_n$ when $n>1$ if and only if it passes through $p$.
Moreover, when $n>1$, such point appears as the zero locus of the unique non zero global section of $\HH^0(\mathcal{E}_n)$.

\medskip

Let us denote by $NF(p)=\{\mathcal{E}_n, n\in \N^*\}$ the set of all such bundles.\\
Since it is clear that a direct sum of two line bundles is invariant under 
the action of any subgroup of $\mathrm{PGL}(3)$, we can consider only the group action on indecomposable bundles.

\medskip

We conclude this section studying the action, and the possible invariance, of the considered groups on the bundles in $NF(p)$.
\begin{lem}\label{NF-invariant} The behaviour of $NF(p)$ under the action of the three considered subgroups is the following:
\begin{enumerate}
 \item Any element in  $NF(p)$ is invariant under the action of $G_p$ and $\mathbf{B}$.
 \item The only invariant vector bundle in $NF(p)$ under the action of $G_L$ is 
 $\mathcal{E}_1=T_{\p^2}(-1)$.
\end{enumerate}
\end{lem}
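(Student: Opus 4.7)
The plan is to reduce everything to the uniqueness result from \cite{MV} recalled in Section~3: a nearly free bundle on $\p^2$ with given exponents and given jumping point is unique up to isomorphism. Once this is set up, both parts are verifications about how the defining data of $\mathcal{E}_n$ transforms under the relevant group actions.

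For part (1), I would first note that $\mathbf{B}\subset G_p$, so it is enough to prove $G_p$-invariance. Fix $g\in G_p$ and consider $g^*\mathcal{E}_n$. Pulling back the defining short exact sequence of $\mathcal{E}_n$ gives another short exact sequence of the same numerical shape
$$0\to \sO_{\p^2}(-n)\to \sO_{\p^2}(1-n)^{2}\oplus \sO_{\p^2}\to g^*\mathcal{E}_n\to 0,$$
because $g\in \mathrm{PGL}(3)$ acts trivially on $\mathrm{Pic}(\p^2)$. Hence $g^*\mathcal{E}_n$ is again nearly free with exponents $(1,n)$. Next, I would check the behaviour of the jumping locus under pullback: for any automorphism $g$, a line $\ell$ is a jumping line of $g^*\mathcal{E}_n$ if and only if $g(\ell)$ is a jumping line of $\mathcal{E}_n$, so the jumping locus of $g^*\mathcal{E}_n$ is the pencil of lines through $g^{-1}(p)$. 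Since $g\in G_p$ fixes $p$, this pencil still has base point $p$. The uniqueness result from \cite{MV} now forces $g^*\mathcal{E}_n\cong \mathcal{E}_n$.

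For part (2), the bundle $\mathcal{E}_1=\mathcal{T}_{\p^2}(-1)$ is homogeneous, hence invariant under all of $\mathrm{PGL}(3)$, in particular under $G_L$. For $n\ge 2$, $\mathcal{E}_n$ has a genuine pencil of jumping lines based at $p$, so the jumping-point argument can be run in reverse. By Lemma~\ref{lem-tranGL}~(2), $G_L$ acts transitively on $L$, so there exists $g\in G_L$ with $g(p)\ne p$ (equivalently $g^{-1}(p)\ne p$); the pullback $g^*\mathcal{E}_n$ is then nearly free with jumping point $g^{-1}(p)\ne p$, so by the same uniqueness statement it cannot be isomorphic to $\mathcal{E}_n$. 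This shows that $\mathcal{E}_n$ is not $G_L$-invariant for any $n\ge 2$.

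No step looks genuinely difficult: the only point requiring a little care is the functoriality of the jumping locus under pullback by an automorphism (one needs $(g|_\ell)^*(\mathcal{E}|_{g(\ell)})\cong (g^*\mathcal{E})|_\ell$ has the same splitting type as $\mathcal{E}|_{g(\ell)}$), but this is a standard observation. Everything else is a bookkeeping exercise invoking the characterization of nearly free bundles by $(p,(a,b))$ recalled in the introduction.
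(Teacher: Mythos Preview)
Your argument is correct, and for part~(2) it coincides with the paper's proof almost verbatim: both observe that the jumping point $p$ of $\mathcal{E}_n$ (for $n\ge 2$) is moved by some element of $G_L$, so $G_L$-invariance fails.

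For part~(1), however, you take a genuinely different route. The paper proves $G_p$-invariance by an explicit computation: it writes down the dual resolution, applies $g\in G_p$ to the map $(y,z,x^n)$, and exhibits a concrete invertible matrix $N$ (depending on $g$) making the two resolutions commute, thereby producing the isomorphism $g^*\mathcal{E}_n^\vee\cong\mathcal{E}_n^\vee$ by hand. Your approach instead appeals to the classification result from \cite{MV}: since $g^*\mathcal{E}_n$ is again nearly free with the same exponents and the same jumping point, uniqueness forces the isomorphism. Your argument is cleaner and more conceptual, and it makes transparent \emph{why} invariance holds (the group preserves the classifying datum). The paper's computation, on the other hand, is entirely self-contained and does not rely on importing the uniqueness theorem; it also supplies the explicit matrix $N$, which the paper reuses later in the proof of Theorem~\ref{thm-invariant-t}. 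One small remark: your jumping-locus argument in~(1) literally applies only for $n\ge 2$, since $\mathcal{E}_1$ is uniform; you should add a clause noting that $\mathcal{E}_1=T_{\p^2}(-1)$ is homogeneous and hence trivially $G_p$-invariant.
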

\begin{proof}
(1) Let us prove first that any element in  $NF(p)$ is $G_p$-invariant. Being $\mathbf{B} = G_p \cap G_L$, this will also prove the invariance of any bundle in $NF(p)$ under the action of $\mathbf{B}$.

 Consider the dual exact sequence 
$$
\begin{CD}
 0@>>> \mathcal{E}^{\vee}_n @>>> \sO_{\p^2}^2(n-1) \oplus \sO_{\p^2} @>(y,z,x^n)>>  \sO_{\p^2}(n)@>>>  0,
\end{CD}
$$
and the action, on the sequence, given by the element $g=\left[
\begin{array}{ccc}
1 & u & v \\
0 & a & b  \\
0 & c & d  \\
\end{array}
\right]\in G_p$.\\
We obtain 
$$
\begin{CD}
 0@>>> g^{*}\mathcal{E}^{\vee}_n @>>>  \sO_{\p^2}^2(n-1) \oplus \sO_{\p^2} @>(ay+bz,cy+dz,(x+uy+vz)^n)>>  \sO_{\p^2}(n)@>>>  0,
\end{CD}
$$
which fits into the following commutative diagram
$$\begin{CD}
0 @>>> g^{*}\mathcal{E}^{\vee}_n   @>>>  \sO_{\p^2}^2(n-1) \oplus \sO_{\p^2} @>(ay+bz,cy+dz,(x+uy+vz)^n)>> \sO_{\p^2}(n) @>>> 0\\
  @.   @V{\simeq}VV    @V{N}VV  @VVV \\
  0 @>>> \mathcal{E}^{\vee}_n  @>>>  \sO_{\p^2}^2(n-1) \oplus \sO_{\p^2} @>(y,z,x^n)>> \sO_{\p^2}(n) @>>> 0,
\end{CD}
$$ with $$N=\left[
\begin{array}{ccc}
a & c & f_{n-1} \\
b & d & g_{n-1} \\
0 & 0 & 1
\end{array}
\right]$$ and $f_{n-1}$ and $g_{n-1}$ are degree $n-1$ polynomials verifying $$(x+uy+vz)^n=x^n+yf_{n-1}+zg_{n-1}.$$
The isomorphism induced in the left map of the diagram proves the invariance.\\

\noindent(2) Let us prove the second item.\\
If $n=1$ the bundle $\mathcal{E}_1$ is homogeneous and fixed by the whole group $\mathrm{PGL}(3)$.\\
If $n>1$, we have recalled that the bundle $\mathcal{E}_n$, and therefore the locus of the jumping lines, is determined by its jumping point $p$. 
But this point is not fixed by all the elements in $G_L$, proving that $\mathcal{E}_n$ is not invariant under its action.  
\end{proof}

\section{Splitting type of invariant bundles}\label{sec-splitting}
The splitting type of a vector bundle is the way its restriction on a given line splits. In this section we will describe the possible splitting types of vector
bundles which are invariant under the considered subgroups.\\

As we will see better in Section \ref{sec-specialconf}, this geometric description of the jumping locus is not equivalent to the invariance.

\medskip

Let us now consider a rank two vector bundle $\mathcal{F}$ which is not uniform.
Then the splitting type is constant for any line $l$ belonging to a non empty open set $ U\subset \PD^2$. This is usually referred to as the \textit{general splitting type}. 
Let us assume that  $\mathcal{F}\otimes \sO_l=\sO_l(a)\oplus \sO_l(b)$ for $l \in U$, denoting by $\delta=|a-b|$ the gap appearing for such general splitting. This gap is minimal on $U$, in other words,
for any line $l$ in the projective plane, we have that $\delta \le \delta(l)=|a_l-b_l|$ where $\mathcal{F}\otimes \sO_l= \sO_l(a_l)\oplus \sO_l(b_l)$.\\
Since $\mathcal{F}$ is not uniform,  the \textbf{set} $S(\mathcal{F})_{\textrm{set}}=\{l, \delta(l)> \delta\}$, whose lines are called \textit{jumping lines}, is not empty.\\
These jumping lines posses a scheme structure that we will denote by $S(\mathcal{F})$. 

To simplify the description of this scheme let us assume that the bundle
$\mathcal{F}$ is normalized, that is $c_1=c_1(\mathcal{F})\in \{-1,0\}$. 

\begin{itemize}
\item When $c_1=0$ and $\mathcal{F}$ is stable (resp. semistable) then $\delta=0$ and  $S(\mathcal{F})$ is a curve (resp. a union of lines that can have multiplicity) of degree $c_2(\mathcal{F})$.

\item  When $c_1=-1$ and $\mathcal{F}$ is stable then $\delta=1$ and $S(\mathcal{F})$ is, in general, a finite scheme but it could also contain a divisor. 
If it is a finite scheme then its length is the binomial number $\binom{c_2(\mathcal{F})}{2}$.

\item  When $\mathcal{F}$ is unstable then $\delta \ge 2$ and there exists $n>0$ such that $\HH^0(\mathcal{F}(c_1-n))\neq 0$ 
and $\HH^0(\mathcal{F}(c_1-n-1))=0$. Then $\hh^0(\mathcal{F}(c_1-n))=1$ and the unique non zero section vanishes in codimension $2$.
The lines meeting this zero scheme form the scheme $S(\mathcal{F})$ of jumping lines. 
\end{itemize}

\subsection{Splitting type of $G_p$-invariant bundles}\label{subsec-splitGp}
\begin{lem}
\label{Lem-technique} Let $\mathcal{F}$ be a non uniform $G_p$-invariant rank two vector bundle on $\p^2$. Then, 
$S_{\mathrm{set}}(\mathcal{F})=p^{\vee}$ and $\delta(l)$ is constant for any $l\ni p.$
\end{lem}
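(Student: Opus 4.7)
The plan is to combine the transitivity properties of $G_p$ established in Lemma~\ref{Lem1} with the fact that the splitting type of a vector bundle is preserved under pullback by automorphisms. Since $\cF$ is $G_p$-invariant, for every $g\in G_p$ we have $g^{*}\cF\simeq \cF$, and restricting to a line $\ell\subset\p^2$ gives $\cF\otimes\sO_{\ell}\simeq \cF\otimes\sO_{g(\ell)}$. In particular $\delta(\ell)=\delta(g(\ell))$, so $\delta(\cdot)$ is constant on each $G_p$-orbit of lines in $\PD^2$. By Lemma~\ref{Lem1}, the action of $G_p$ on lines has exactly two orbits, namely the pencil $p^{\vee}$ of lines through $p$ and its complement $\PD^2\setminus p^{\vee}$; let $\delta_1$ and $\delta_2$ be the corresponding constant values of $\delta$.

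Next I would exploit that $\PD^2\setminus p^{\vee}$ is Zariski open and dense in $\PD^2$, so it must meet the (non-empty) open locus where the generic splitting type is attained; this forces $\delta_2=\delta$. Since the two orbits cover all of $\PD^2$, the equality $\delta_1=\delta_2$ would mean that $\delta(\cdot)$ is constant on $\PD^2$, i.e.\ that $\cF$ is uniform, contradicting the hypothesis. Hence $\delta_1>\delta_2=\delta$, which says precisely that every line through $p$ is a jumping line while no line outside $p^{\vee}$ is jumping. This yields $S_{\mathrm{set}}(\cF)=p^{\vee}$, and the value $\delta(l)=\delta_1$ is constant on this pencil by the first paragraph.

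There is no real obstacle: the lemma is a formal consequence of transitivity combined with the generic-versus-jumping dichotomy. The only mildly delicate point is the identification $\delta_2=\delta$, but this is immediate from the density of the open orbit, which must overlap the open set where the generic splitting occurs.
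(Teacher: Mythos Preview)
Your argument is correct and follows essentially the same route as the paper: both proofs use the transitivity of $G_p$ on the two families of lines (Lemma~\ref{Lem1}) together with the $G_p$-invariance of the splitting type to force the jumping locus to coincide with one orbit. The paper phrases the key step as ``$S(\cF)$ has dimension at most $1$, hence cannot contain a line $L$ with $p\notin L$'', while you phrase it as ``the open orbit $\PD^2\setminus p^{\vee}$ meets the generic-splitting locus''; these are the same observation. One small notational quibble: the isomorphism $\cF\otimes\sO_\ell\simeq\cF\otimes\sO_{g(\ell)}$ is between bundles on different copies of $\p^1$ (identified via $g$), so strictly speaking it is the equality of splitting types that follows, which is all you use.
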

\begin{rmk}
 This is a set-theoretic description. The scheme of jumping lines is then a multiple structure on $p^{\vee}$.
\end{rmk}
\begin{proof}
Since $\mathcal{F}$ is not uniform, $S(\mathcal{F})$ has dimension at most $1$. By Lemma  \ref{Lem1},  $S(\mathcal{F})$ cannot contain a line $L$ such that $p\notin L$ and, 
because of the invariance combined with the transitivity of the chosen action, it coincides with the whole set $p^{\vee}$ of lines through $p$. Moreover, any jumping line has the same splitting type. 
\end{proof}

\subsection{Splitting type of $G_L$-invariant bundles}
\begin{lem}
\label{Lem-technique-GL} Let $\mathcal{F}$ be a non uniform $G_L$-invariant rank two vector bundle on $\p^2$. Then, 
$\mathcal{F}$ is stable, $c_1=-1$ and 
$S(\mathcal{F})$ is a finite scheme of length $\frac{c_2(\mathcal{F})(c_2(\mathcal{F})-1)}{2}$ supported by  $\{L^{\vee}\}$.
\end{lem}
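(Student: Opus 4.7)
The plan is to combine an orbit analysis of $G_L$ acting on the dual projective plane with the classification of jumping schemes recalled at the beginning of Section \ref{sec-splitting}. First I identify the $G_L$-orbits on $\check{\p}^2$. By definition $L^\vee$ is fixed by $G_L$. For any two lines $\ell,\ell'\neq L$, each meets $\p^2\setminus L$ in an affine open subset, so I can pick distinct points $p_1,p_2\in\ell\setminus L$ and distinct points $p_1',p_2'\in\ell'\setminus L$; Lemma \ref{lem-tranGL}(3) then supplies $g\in G_L$ with $g(p_i)=p_i'$, and hence $g(\ell)=\ell'$. Consequently $G_L$ has exactly two orbits on $\check{\p}^2$: the closed point $\{L^\vee\}$ and the open complement $\check{\p}^2\setminus\{L^\vee\}$.

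Since twisting by a line bundle preserves both $G_L$-invariance and the jumping locus, I may assume $\mathcal{F}$ is normalized with $c_1(\mathcal{F})\in\{-1,0\}$. The scheme $S(\mathcal{F})$ is $G_L$-invariant, and $S(\mathcal{F})_{\textrm{set}}$ is a closed subset of $\check{\p}^2$ which is nonempty (because $\mathcal{F}$ is not uniform) and proper (because the generic splitting type is attained on a nonempty open set). By the orbit description this forces $S(\mathcal{F})_{\textrm{set}}=\{L^\vee\}$, so $S(\mathcal{F})$ is zero-dimensional and supported at $L^\vee$.

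It remains to pin down $\mathcal{F}$ via the dichotomy at the start of Section \ref{sec-splitting}. If $c_1=0$ and $\mathcal{F}$ is stable, $S(\mathcal{F})$ is a curve of degree $c_2$, contradicting $\dim S(\mathcal{F})=0$. If $c_1=0$ and $\mathcal{F}$ is strictly semistable, $S(\mathcal{F})$ is a nonempty union of lines in $\check{\p}^2$, again contradicting $\dim S(\mathcal{F})=0$. If $\mathcal{F}$ is unstable, the essentially unique section of $\mathcal{F}(c_1-n)$ vanishes on a nonempty zero-dimensional subscheme of $\p^2$, and the lines through any point of that subscheme form a full line in $\check{\p}^2$, forcing $\dim S(\mathcal{F})\geq 1$, another contradiction. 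The only remaining case is $c_1=-1$ with $\mathcal{F}$ stable, in which case $S(\mathcal{F})$ is a finite scheme of length $\binom{c_2(\mathcal{F})}{2}=\tfrac{c_2(\mathcal{F})(c_2(\mathcal{F})-1)}{2}$, as claimed. The main obstacle is the first step: Lemma \ref{lem-tranGL} is phrased in terms of points of $\p^2$, so the work lies in translating it into an orbit statement on $\check{\p}^2$; once that is done, the rest of the argument is a short check of dimensional incompatibility against the singleton support $\{L^\vee\}$.
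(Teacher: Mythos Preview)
Your proof is correct and follows essentially the same route as the paper's: use the transitivity of $G_L$ on $\check{\p}^2\setminus\{L^\vee\}$ (which the paper cites directly from Lemma~\ref{lem-tranGL}, while you spell out how item~(3) yields it) to force $S(\mathcal{F})_{\mathrm{set}}=\{L^\vee\}$, and then invoke the dichotomy from the start of Section~\ref{sec-splitting} to rule out $c_1=0$ and the unstable case. The only difference is that you are more explicit in unpacking the orbit analysis and in separating the semistable subcase.
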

\begin{proof}
Since by Lemma \ref{lem-tranGL} the group $G_L$ acts transitively on $\check{\p^2} \setminus \{L^{\vee}\}$, the set $S_{\mathrm{set}}(\mathcal{F})$ cannot contain
a line distinct from $L$. As we said before, if $c_1(\mathcal{F})=0$ or if $\mathcal{F}$ is unstable then its scheme of jumping lines contains necessarily a curve. 
Then, $\mathcal{F}$ is stable and $c_1=-1$. Since its scheme of jumping lines is finite its length is $\frac{c_2(\mathcal{F})(c_2(\mathcal{F})-1)}{2}$.
\end{proof}

\subsection{Splitting type of $\mathbf{T}$-invariant bundles}
\begin{lem}
Let $\mathcal{F}$ be a non uniform $\mathbf{T}$-invariant rank two vector bundle on $\p^2$ as presented in (\ref{bundle-torusinvariant}), i.e. 
$$
\mathcal{F} \simeq E(a,b,c) \otimes \sO_{\p^2}\left(\frac{a+b+c-c_1}{2}\right).$$
Suppose $a\leq b \leq c$ and denote $p_1 =(1:0:0)$, $p_2 =(0:1:0)$, $p_3 =(0:0:1)$, $L_1:=\{x=0\}$, $L_2:=\{y=0\}$ and $L_3:=\{z=0\}$.\\ 
Then, $S_{\mathrm{set}}(\mathcal{F})\subset \left\{p_1^{\vee},p_2^{\vee},p_3^{\vee}\right\}$ and the order of jump depends on the stability and the mutual relations between $a, b$ and $c$. More precisely, 
the possible splitting types are:
 \begin{itemize}
 \item $\mathcal{F}_l=\sO_l(k)\oplus \sO_l(-k+c_1)$ when $l^{\vee}\in \check{\p^2}\setminus \left\{p_1^{\vee},p_2^{\vee},p_3^{\vee}\right\}$. If $\mathcal{F}$ is stable, then $k=0$. If not, $k= \frac{c-a-b+c_1}{2}$ for $a+b \leq c$ and $k=\frac{a+b-c+c_1}{2}$ for $c \leq a+b$.
 \item $\mathcal{F}_l=\sO_l\left(\frac{c-a-b+c_1}{2}\right)\oplus \sO_l\left(\frac{a+b-c+c_1}{2}\right)$ when $l^{\vee} \in  p_1^{\vee}\setminus \{L_2^{\vee}\}$ or $l^{\vee} \in  p_2^{\vee}\setminus \{L_1^{\vee}\}$.
\item $\mathcal{F}_l=\sO_l\left(\frac{b-a-c+c_1}{2}\right)\oplus \sO_l\left(\frac{a+c-b+c_1)}{2}\right)$ when $l^{\vee} \in  p_3^{\vee}\setminus \{L_1^{\vee}\}$.
\item $\mathcal{F}_l=\sO_l\left(\frac{a-b-c+c_1}{2}\right)\oplus \sO_l\left(\frac{b+c-a+c_1}{2}\right)$ when $l^{\vee} = L_1^{\vee}$.
\end{itemize}
\end{lem}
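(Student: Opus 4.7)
The plan is to exploit the $\mathbf{T}$-invariance of $E(a,b,c)$, and of its twist $\cF$, to reduce the computation of splitting types to one line per torus orbit on $\check{\p^2}$. The invariance is immediate from the defining sequence (\ref{bundle-torusinvariant}): the action of $\mathrm{diag}(\lambda_1,\lambda_2,\lambda_3)$ multiplies $(x^a,y^b,z^c)$ componentwise by $(\lambda_1^a,\lambda_2^b,\lambda_3^c)$, which is a diagonal automorphism of $\sO_{\p^2}(a)\oplus \sO_{\p^2}(b)\oplus \sO_{\p^2}(c)$ and therefore induces an automorphism of the cokernel. The orbits of $\mathbf{T}$ on $\check{\p^2}$ are easy to list: three fixed points $L_1^\vee,L_2^\vee,L_3^\vee$; three one-dimensional orbits $p_i^\vee\setminus\{L_j^\vee,L_k^\vee\}$ of lines through $p_i$ distinct from the two coordinate lines meeting at $p_i$; and one two-dimensional open orbit of lines avoiding all three $p_i$. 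Consequently $\cF|_l$ admits at most four distinct splittings, and the grouping in the statement --- $L_3^\vee$ absorbed into $p_1^\vee\setminus\{L_2^\vee\}$ and $L_2^\vee$ into $p_3^\vee\setminus\{L_1^\vee\}$ --- just records that these fixed points lie in the closures of the corresponding one-dimensional orbits and share the common splitting by upper semicontinuity plus constancy on the orbit.

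On each orbit I compute $E(a,b,c)|_l$ and then pass to $\cF|_l$ by shifting each summand by $(c_1-a-b-c)/2$. For $l=L_i$ the section $x_i^{d_i}|_{L_i}$ is identically zero, so (\ref{bundle-torusinvariant}) restricted to $L_i$ splits off the $i$-th summand $\sO_l(d_i)$ and the cokernel is $\sO_l(d_i)\oplus \sO_l(d_j+d_k)$ with $\{i,j,k\}=\{1,2,3\}$; this produces the three axial splittings. For a line in the orbit through $p_i$ I pick a representative, say $l=\{y=z\}$ for $p_1$, parametrize $l\cong \p^1$ with coordinates $(s,t)$ so that after rescaling the restricted defining map takes the form $(s^a,t^b,t^c)$, and then compute $h^0(E|_l(-n))$ from the long exact sequence of the restriction. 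The contribution beyond $\bigoplus_i h^0(\sO_{\p^1}(d_i-n))$ is the kernel of the multiplication map $H^1(\sO_{\p^1}(-n))\to \bigoplus_i H^1(\sO_{\p^1}(d_i-n))$, which can be read off the monomial basis $\{s^{-i}t^{i-n}\}$ by pure degree inequalities. Matching the resulting piecewise-linear function $n\mapsto h^0(E|_l(-n))$ against $(\alpha-n+1)_+ + (\beta-n+1)_+$ uniquely pins down the splitting as $\sO_l(a+b)\oplus \sO_l(c)$ on the orbits through $p_1$ and $p_2$, and as $\sO_l(b)\oplus \sO_l(a+c)$ on the orbit through $p_3$. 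The asymmetry reflects which of the three powers $x^a,y^b,z^c$ is the ``odd one out'' at $p_i$, once one has fixed the order $a\le b\le c$.

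For the open orbit I use stability. The maximal line-subsheaf of $E(a,b,c)$ is $\sO_{\p^2}(c)$, coming from the third summand of (\ref{bundle-torusinvariant}) and of slope $c$, whereas $\mu(E)=(a+b+c)/2$; hence $\cF$ is stable exactly when $c<a+b$. In the stable case the generic splitting is forced to be balanced, giving $\cF|_l=\sO_l(0)\oplus \sO_l(c_1)$ with $k=0$. When $c\ge a+b$, the destabilizing $\sO_{\p^2}(c)\subset E$ restricts to a sub-line bundle of $E|_l$ on any line $l$ avoiding $p_3$, and semicontinuity together with the fact that the non-open orbits through $p_1$ and $p_2$ already realize the splitting $\sO_l(c)\oplus \sO_l(a+b)$ forces equality generically. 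Twisting then yields the formulas in the statement, with the two sub-cases $a+b\le c$ and $c\le a+b$ being mirror images of each other.

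The main obstacle is the one-dimensional orbit case: a local Smith-normal-form analysis of the inclusion $\sO(d_j)\oplus \sO(d_k)\hookrightarrow E|_l$ at the single point where the snake-lemma torsion cokernel is supported only determines the invariant factor $u^{d_i}$, not which line summand of $E|_l$ absorbs the extra degree, so naively one cannot distinguish the candidate splittings $\{a+b,c\}$ and $\{b,a+c\}$. The global cohomological invariant $h^0(E|_l(-n))$ for a well-chosen $n$ in the range where the two candidates disagree (e.g.\ $c<n<a+c$) is what ultimately separates them, and the combinatorics of the $H^1$-kernel described above is the decisive input. Once that delicate case is handled, the remaining splittings and the twisting substitution are routine.
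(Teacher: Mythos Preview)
Your argument is correct, but it is considerably more elaborate than the paper's, which dispatches every case by the same elementary trick: restrict the presentation matrix $A=[x^a\;\;y^b\;\;z^c]$ to the chosen line and perform column operations over $\sO_l$. For instance, on a line $l=\{y=\alpha x\}$ through $p_3$ (with $\alpha\neq 0$) the matrix becomes $[x^a\;\;\alpha^b x^b\;\;z^c]$; subtracting $\alpha^b x^{b-a}$ times the first column from the second (using $a\le b$) yields $[x^a\;\;0\;\;z^c]$, so $\sO_l(b)$ splits off and the remaining cokernel of $(x^a,z^c)$ on $\p^1$ is $\sO_l(a+c)$. The same manoeuvre, with the roles of the variables permuted, handles all the one-dimensional orbits and the coordinate lines, and on a generic line one uses that $(x+y)^c$ (after parametrizing so that $z=-(x+y)$) lies in the ideal $(x^a,y^b)$ whenever $c\ge a+b-1$ to zero out the third column and read off $(c,a+b)$.

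What you do differently: you organize the computation by torus orbits (which is natural and matches the paper's implicit case division), but then for the one-dimensional orbits you pass to cohomology and compute $h^0(E|_l(-n))$ via the kernel of the $H^1$-map, and for the open orbit you invoke Grauert--M\"ulich in the stable range and a subbundle-plus-semicontinuity sandwich in the unstable range. All of this is valid. However, your ``main obstacle'' --- that a local Smith-normal-form analysis cannot decide between $\{a+b,c\}$ and $\{b,a+c\}$ --- is an artefact of working locally at the torsion point of the snake-lemma cokernel rather than globally over $\sO_l$. The global column operation immediately shows \emph{which} summand of $\sO_l(a)\oplus\sO_l(b)\oplus\sO_l(c)$ splits off (the one whose entry becomes zero), so no cohomological tie-breaker is needed. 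The paper's approach is thus both shorter and more uniform across cases; your approach has the merit of making the role of stability and semicontinuity explicit, but at the cost of machinery the problem does not actually require.
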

\begin{proof}
Having an explicit description of the defining matrix of the bundle, given in (\ref{bundle-torusinvariant}) as
$$
A = \left[x^a \:\:\: y^b \:\:\: z^c\right],
$$
we can take its restriction on the considered lines. For example, if $l:=\{y = \alpha x \mid \alpha \neq 0\}$, we get, after linear combinations of its columns, $A_l = \left[x^a \:\:\: 0 \:\:\: z^c\right]$, from which the splitting type follows directly. All the other cases can be done analogously.
\end{proof}

\subsection{Splitting type of $\mathbf{B}$-invariant bundles}\label{sec-splitT}
\begin{lem}
\label{Lem-technique-T} Let $\mathcal{F}$ be a non uniform $\mathbf{B}$-invariant rank two vector bundle on $\p^2$. Then, 
$S_{\mathrm{set}}(\mathcal{F})=p^{\vee}$ or $S_{\mathrm{set}}(\mathcal{F})=\{L^{\vee}\}$. The second case cannot occur if 
$c_1=0$ or if $\mathcal{F}$ is unstable. More precisely, 
there are three possible splitting types:
 \begin{itemize}
 \item $\mathcal{F}_l=\sO_l(k)\oplus \sO_l(-k+c_1)$  with $k\ge 0$ when $l^{\vee}\in \check{\p^2}\setminus p^{\vee}$. 
 \item $\mathcal{F}_l=\sO_l(k+h)\oplus \sO_l(-k-h+c_1)$ with $h\ge 0$  when $l^{\vee} \in  p^{\vee}\setminus \{L^{\vee}\}$.
\item $\mathcal{F}_l=\sO_l(k+h+i)\oplus \sO_l(-k-h-i+c_1)$ with $h\ge 0, i\ge 0$  when $l=L$. 
\end{itemize}
\end{lem}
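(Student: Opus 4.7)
The plan is to combine the orbit analysis from Lemma~\ref{invariant-t} with upper semicontinuity of the splitting type. By items (2) and (3) of that lemma, and since $\mathbf{B}$ fixes the point $L^\vee$, the action of $\mathbf{B}$ on $\check{\p^2}$ decomposes into exactly three orbits: the dense open set $\check{\p^2}\setminus p^\vee$, the punctured pencil $p^\vee\setminus\{L^\vee\}$, and the fixed point $\{L^\vee\}$. Since $\mathcal{F}$ is $\mathbf{B}$-invariant, the isomorphism class of $\mathcal{F}|_l$ depends only on the orbit of $l^\vee$; in particular $S_{\mathrm{set}}(\mathcal{F})$ is a union of these three orbits. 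Being closed in $\check{\p^2}$ and (by non-uniformity) nonempty, and noting that $\overline{p^\vee\setminus\{L^\vee\}}=p^\vee$, the only possibilities are $S_{\mathrm{set}}(\mathcal{F})=\{L^\vee\}$ or $S_{\mathrm{set}}(\mathcal{F})=p^\vee$.

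For the three splitting formulas, I would write $\mathcal{F}|_l=\sO_l(a_l)\oplus \sO_l(c_1-a_l)$ with $2a_l\ge c_1$, and use that $l^\vee\mapsto a_l$ is upper semicontinuous. On the dense open orbit $\check{\p^2}\setminus p^\vee$, $a_l$ attains its minimum value $k\ge 0$ (the lower bound coming from $c_1\in\{-1,0\}$), giving the first bullet. Any $l^\vee\in p^\vee\setminus\{L^\vee\}$ lies in the closure of this dense orbit, so semicontinuity forces $a_l=k+h$ for some integer $h\ge 0$, giving the second bullet. Finally $L^\vee$ lies in the closure of $p^\vee\setminus\{L^\vee\}$, so a further application of semicontinuity yields $a_L=k+h+i$ with $i\ge 0$, giving the third bullet.

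It remains to exclude $S_{\mathrm{set}}(\mathcal{F})=\{L^\vee\}$ when $c_1=0$ or when $\mathcal{F}$ is unstable. For this I would invoke the classification recalled at the beginning of Section~\ref{sec-splitting}: for a normalized rank two bundle with $c_1=0$, the scheme $S(\mathcal{F})$ is a curve of degree $c_2(\mathcal{F})$, and for an unstable normalized bundle $S(\mathcal{F})$ contains the curve swept by lines meeting the zero scheme of the minimal non-zero section. In both situations the jumping locus has positive dimension, and therefore cannot coincide with the single point $\{L^\vee\}$, forcing $c_1=-1$ and $\mathcal{F}$ stable. The main technical subtlety I expect to check is that the semicontinuity argument really yields only the weak inequalities $h\ge 0$ and $i\ge 0$, so that the three bullet formulas correctly absorb the non-jumping behaviour on a given orbit as the degenerate case $h=0$ or $h=i=0$.
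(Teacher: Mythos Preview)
Your proposal is correct and follows essentially the same approach as the paper: both use the three-orbit decomposition of $\check{\p^2}$ coming from Lemma~\ref{invariant-t} to see that the splitting type is constant on each orbit, and then compare gaps across orbits. You are somewhat more explicit than the paper in two respects --- you invoke upper semicontinuity of the gap function to justify $h\ge 0$ and $i\ge 0$ (the paper simply asserts that the gap on $L$ ``could be bigger''), and you spell out the exclusion of $S_{\mathrm{set}}(\mathcal{F})=\{L^\vee\}$ in the $c_1=0$ and unstable cases by appealing to the discussion at the start of Section~\ref{sec-splitting}, which the paper's proof leaves implicit.
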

 
\begin{proof}
 Denote the generic splitting of $\mathcal{F}$ by $\sO_l(k)\oplus \sO_l(-k+c_1)$; in particular, if $\mathcal{F}$ is stable (or semi-stable), we have $k=0$.
 The descriptions given in the second and third item follow directly from the transitivity of the action of $\mathbf{B}$ described in Lemma \ref{invariant-t}.
 Indeed, all the (possible) jumping lines are the ones passing through the point $p$; moreover,
 they must have all the same splitting type except for the line $L$ fixed by the action, where the gap $\delta_L$ could be bigger.\\
 Specifically, if $h>0$ then the set of jumping lines is the line in the dual projective plane  
 $S_{\mathrm{set}}(\cF)=p^{\vee}$; on the other hand, if $h=0$ and $i>1$, this set is just a point $S_{\mathrm{set}}(\cF)=\{L^{\vee}\}$. 
\end{proof}

\begin{rmk}
Under the hypothesis and using the notation of the previous result, notice that the case $h=0$ and $i>0$, which gives that $S_{\mathrm{set}}(\cF)=p^{\vee}$, can only occur if if $c_1=-1$ and $\mathcal{F}$ stable.

\end{rmk}

\section{Main results}\label{sec-main}
In this section we will finally characterize the vector bundles which are invariant under the action of the considered subgroups.
\begin{thm}\label{thm-invariant-t}
 A non decomposable rank two vector bundle $\cF$ is $\mathbf{B}$-invariant if and only if it is a nearly free vector bundle.
\end{thm}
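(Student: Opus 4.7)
My plan is to handle the backward direction via Lemma 3.1(1), which already shows every nearly free bundle is $\mathbf{B}$-invariant. For the converse, I would combine Kaneyama's classification of $\mathbf{T}$-invariant bundles with a direct analysis of the matrix realizing the $\mathbf{B}$-equivariance. Since $\mathbf{T} \subset \mathbf{B}$, the bundle $\cF$ is $\mathbf{T}$-invariant, so Kaneyama gives $\cF \simeq E(a, b, c) \otimes \sO_{\p^2}(t)$ for some positive integers $a, b, c$ and $t \in \Z$. Twists commute with the $\mathbf{B}$-action, so I reduce to $t = 0$; the aim is then to show that in the Kaneyama presentation compatible with the $\mathbf{B}$-linearization (where the first coordinate $x$ corresponds to the fixed point $p$), $b = c = 1$, so that $\cF \simeq E(a, 1, 1)$ is a twist of the nearly free bundle $\mathcal{E}_a$.

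For any $g = \left(\begin{smallmatrix} 1 & r & s \\ 0 & \beta & v \\ 0 & 0 & \gamma \end{smallmatrix}\right) \in \mathbf{B}$, the isomorphism $g^*\cF \simeq \cF$ yields a commutative diagram
\[\begin{CD}
0 @>>> \sO @>\phi_g>> \sO(a)\oplus\sO(b)\oplus\sO(c) @>>> g^*\cF @>>> 0 \\
@. @| @VVNV @VV\simeq V \\
0 @>>> \sO @>\phi>> \sO(a)\oplus\sO(b)\oplus\sO(c) @>>> \cF @>>> 0
\end{CD}\]
with $\phi = (x^a, y^b, z^c)^T$, $\phi_g = ((x+ry+sz)^a, (\beta y+vz)^b, (\gamma z)^c)^T$, and an invertible matrix $N$ whose entry $N_{ij}$ is a homogeneous polynomial of degree $d_i - d_j$ with $(d_1, d_2, d_3) = (a, b, c)$, vanishing when the degree is negative. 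Commutativity is the polynomial identity $N\phi_g = \phi$.

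The hard part will be extracting $b = 1$ and $c = 1$ from this identity. I will focus on the first row: $N_{11}(x+ry+sz)^a + N_{12}(\beta y+vz)^b + N_{13}(\gamma z)^c = x^a$. Matching the $x^a$ coefficient gives $N_{11} = 1$. The coefficient of $x^{a-1}y$ equals $ar$ plus a contribution from $N_{12}(\beta y+vz)^b$; by homogeneity this contribution is nonzero only when $b \leq 1$, since $N_{12}$ (of degree $a - b$) can contain an $x^{a-1}$ monomial precisely in that case. If $b \geq 2$, one would need $ar = 0$ for every $r \in \C$, a contradiction; hence $b = 1$. With $b = 1$ and the $x^{a-1}$-coefficient of $N_{12}$ determined, analysis of the $x^{a-1}z$ coefficient produces a nonzero multiple of $s\beta - rv$ when $c \geq 2$, and this cannot vanish for all choices of $g$; therefore $c = 1$. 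We conclude $\cF \simeq E(a, 1, 1)$, which after the appropriate twist coincides with $\mathcal{E}_a$, completing the proof.
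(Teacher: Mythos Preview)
Your proof is correct and follows the same strategy as the paper: reduce via Kaneyama's $\mathbf T$-classification to $\cF\simeq E(a,b,c)$, then lift the isomorphism $g^*\cF\simeq\cF$ (for $g\in\mathbf B$) to an invertible matrix $N$ between the two minimal presentations and read off constraints on $(a,b,c)$ from a single row of $N\phi_g=\phi$.

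The execution differs in one useful respect. The paper orders $a\le b\le c$ ``without loss of generality'' and applies the single unipotent $g:(x,y,z)\mapsto(x+y,y,z)$ to force $a=b=1$. You avoid that ordering assumption and instead extract two independent constraints from the $x^{a-1}y$ and $x^{a-1}z$ coefficients, obtaining $b=1$ and $c=1$ directly. This is genuinely cleaner: permuting the exponents in $E(a,b,c)$ amounts to permuting the coordinates and hence changes the isomorphism class of the bundle (the jumping point moves), so the ordering is not a free normalization once $p=(1{:}0{:}0)$ and $\mathbf B$ are fixed. Your argument sidesteps that issue and lands on $E(a,1,1)=\mathcal E_a\in NF(p)$, which is the sharp conclusion.

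Two small remarks. First, the phrase ``for every $r\in\C$'' is slightly misleading: $N$ depends on $g$, so what you really use is that for \emph{some} $g$ with $r\neq 0$ the coefficient $ar$ cannot be cancelled; likewise one specific $g$ with $s\beta-rv\neq0$ suffices for the second step. Second, the clause ``Kaneyama presentation compatible with the $\mathbf B$-linearization'' is not needed and not defined; your argument only uses that the presentation $(x^a,y^b,z^c)$ is minimal (no unit entries), which guarantees that any isomorphism of the cokernels lifts to an \emph{invertible} $N$. It would strengthen the write-up to say this explicitly, since neither you nor the paper spells it out.
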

Before proving this theorem, let us show that it implies the characterization of the invariance for the subgroups $G_p$ and $G_L$.
\begin{corol}
  A non decomposable rank two vector bundle $\cF$ is $G_p$-invariant if and only if it belongs to $NF(p)$.
\end{corol}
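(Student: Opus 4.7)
The plan is to reduce the corollary to Theorem \ref{thm-invariant-t} via the inclusion $\mathbf{B}=G_p\cap G_L\subset G_p$, using Lemma \ref{Lem-technique} to pin down the jumping point.

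The forward implication $\cF\in NF(p)\Rightarrow \cF$ is $G_p$-invariant is already recorded in item (1) of Lemma \ref{NF-invariant}, so nothing new is needed in that direction.

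For the converse, I would argue as follows. Suppose $\cF$ is a non decomposable $G_p$-invariant rank two vector bundle. Since $\mathbf{B}\subset G_p$, the bundle $\cF$ is a fortiori $\mathbf{B}$-invariant, and Theorem \ref{thm-invariant-t} then yields that $\cF$ is a nearly free vector bundle. Thus $\cF$ is isomorphic (up to twist) to some $\cE_n$ with $n\in\N^*$ and has a well-defined jumping point $p'$; the only thing left to verify is that $p'=p$.

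If $n=1$, then $\cF$ is a twist of $T_{\p^2}(-1)$, which is homogeneous, hence belongs to $NF(p)$ for every choice of $p$. If $n>1$, the bundle $\cF$ is not uniform, so Lemma \ref{Lem-technique} forces its set of jumping lines to equal $p^\vee$ set-theoretically. Since the jumping locus of a nearly free bundle is exactly the pencil of lines through its jumping point, this forces $p'=p$, and therefore $\cF\in NF(p)$.

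The substantive content is entirely absorbed into Theorem \ref{thm-invariant-t}; the corollary only has to match the geometric datum of the jumping point of a nearly free bundle with the fixed point $p$ of the acting group, which is already supplied by Lemma \ref{Lem-technique}. I do not foresee any serious obstacle beyond citing the two prior results correctly.
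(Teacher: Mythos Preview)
Your proof is correct and follows the same route as the paper: the forward implication is Lemma \ref{NF-invariant}(1), and the converse is obtained by noting $\mathbf{B}\subset G_p$ and invoking Theorem \ref{thm-invariant-t}. The only difference is that you add an explicit check that the jumping point of the resulting nearly free bundle is $p$ (via Lemma \ref{Lem-technique}), whereas the paper treats this as implicit in the statement of Theorem \ref{thm-invariant-t}, since in context the $\mathbf{B}$-invariant bundles produced there are precisely the $E(1,1,c)$, which already lie in $NF(p)$.
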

\begin{proof}
 We have already seen that any bundle in $NF(p)$ is $G_p$-invariant.
 
 Conversely, we have to prove that these are the only invariant bundles. Since $\mathbf{B}=G_p\cap G_L$, the invariant bundles under the action of $G_p$ must also be 
invariant under the action of $\mathbf{B}$, proving the result.
\end{proof}

\begin{corol}
 A rank two vector bundle $\mathcal{F}$ is invariant under the action of $G_L$ if and only if $\mathcal{F}$ is homogeneous.
\end{corol}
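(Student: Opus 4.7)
The plan is to chain together the previously established results rather than doing a direct proof. The forward direction (``homogeneous implies $G_L$-invariant'') is immediate because $G_L$ is a subgroup of $\mathrm{PGL}(3)$, so any bundle invariant under the full projective linear group is in particular $G_L$-invariant.

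For the converse, I would split into cases by whether $\cF$ is decomposable. If $\cF$ is decomposable, then since it has rank two it splits as $\mathcal{L}_1\oplus \mathcal{L}_2$ with each $\mathcal{L}_i$ a line bundle on $\p^2$; such a direct sum is automatically $\mathrm{PGL}(3)$-invariant, hence homogeneous, and we are done.

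Assume now that $\cF$ is indecomposable and $G_L$-invariant. The key observation is that $\mathbf{B}=G_p\cap G_L\subset G_L$, so $\cF$ is also $\mathbf{B}$-invariant. Applying Theorem \ref{thm-invariant-t}, $\cF$ must be a nearly free vector bundle; up to twist $\cF\simeq \mathcal{E}_n$ for some $n\in \N^*$, with jumping point $p=(1:0:0)$. Now Lemma \ref{NF-invariant}(2) states that the only such $\mathcal{E}_n$ which is $G_L$-invariant is $\mathcal{E}_1=T_{\p^2}(-1)$, which is homogeneous; this closes the argument.

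The conceptual point behind the last step, implicit in Lemma \ref{NF-invariant}(2), is that for $n\ge 2$ the jumping point of $\mathcal{E}_n$ is uniquely determined by the bundle, so $G_L$-invariance would force $p$ to be a fixed point of $G_L$; but by Lemma \ref{lem-tranGL} the group $G_L$ acts transitively on both $L$ and $\p^2\setminus L$, hence fixes no point of $\p^2$ at all. There is no substantial obstacle in this corollary: it is essentially a concatenation of Theorem \ref{thm-invariant-t} and Lemma \ref{NF-invariant}(2), and the only mild care needed is the reduction to the indecomposable case, which is straightforward since a decomposable rank two bundle on $\p^2$ is a sum of line bundles.
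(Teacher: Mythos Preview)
Your argument is correct and follows essentially the same route as the paper: reduce to the $\mathbf{B}$-invariant case via $\mathbf{B}\subset G_L$, invoke Theorem \ref{thm-invariant-t} to land in $NF(p)$, and then use that non-homogeneous nearly free bundles are not $G_L$-invariant. You are slightly more explicit than the paper in spelling out the forward direction and the decomposable case, and you cite Lemma \ref{NF-invariant}(2) directly where the paper points instead to the underlying transitivity statement Lemma \ref{lem-tranGL}, but the substance is identical.
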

\begin{proof}
Theorem \ref{thm-invariant-t} proves that the invariant bundles under the action of $\mathbf{B}=G_p\cap G_L$ all belong to $NF(p)$. Moreover, Lemma \ref{lem-tranGL} shows that a nearly free bundle $\cF$ that is not homogeneous is not $G_L$-invariant, proving the result.
\end{proof}

\begin{proof}[Proof of Theorem \ref{thm-invariant-t}]
 Let $\cF$ be a non decomposable normalized rank two vector bundle which is $\mathbf{B}$-invariant. Since it is necessarily also $\mathbf{T}$-invariant, there exist three positive integers $a,b, c$ such that $\cF(n)\simeq E(a,b,c)$ where $n=\frac{a+b+c-c_1(\cF)}{2}$. We can assume without loss of generality that $a\le b\le c$. Consider the exact sequence
    $$ 
     0\longrightarrow \cF^{\vee}(-n)  \longrightarrow \sO_{\p^2}(-a)\oplus \sO_{\p^2}(-b)\oplus \sO_{\p^2}(-c) \stackrel{(x^a,y^b,z^c)}\longrightarrow  \sO_{\p^2} \longrightarrow 0.
$$
When $a=b=1$, these bundles are nearly free and we have already proved that they are $\bf{B}$-invariant.

\smallskip

Conversely, assume that $b>1$.  Let $g\in \bf{B}$ correspond to the transformation 
$$ (x,y,z) \mapsto (x+y,y,z).$$
Since $g^*\cF^{\vee}=\cF^{\vee}$, this implies (see Lemma \ref{NF-invariant} where this argument is previously used) the existence of a square invertible matrix $N$ fitting in
  $$ \begin{CD}
     \sO_{\p^2}(-a)\oplus \sO_{\p^2}(-b)\oplus \sO_{\p^2}(-c) @>N>>  \sO_{\p^2}(-a)\oplus \sO_{\p^2}(-b)\oplus \sO_{\p^2}(-c) 
    \end{CD}
$$
such that $N^{t}(x^a,y^b,z^c)={}^t((x+y)^a, y^b,z^c).$ 
Let $(\alpha,\beta, \gamma)$ be the first line of $N$. Since $\alpha$ is a non zero constant it can be assumed to be $1$. Then we should have 
$$ x^a+\beta y^b+\gamma z^c=(x+y)^a.$$
This would give $\gamma=0$, $\beta=1$ and $a=b=1$ which contradicts the assumption.

\end{proof}

\begin{rmk}
We would like to observe that, in his work, Kaneyama does not consider the geometric configuration of the locus of jumping lines. It is possible to prove Theorem \ref{thm-invariant-t} only by studying such configuration combined with the invariance, but this leads to a much less direct proof.
\end{rmk}

\section{Special geometric configurations of the jumping locus}\label{sec-specialconf}
In Section \ref{subsec-splitGp} we have noticed that if a rank 2 vector bundle $\mathcal{F}$ on $\p^2$ is invariant under the action of $G_p$, then, if not uniform, 
its jumping locus is given by all lines passing through $p$. Moreover, all jumping lines have the same order.\\
It is compelling to ask ourselves the natural question
\begin{quote}
\textit{Is the invariance equivalent to the obtained special geometric configuration of the jumping locus?}
\end{quote} 
From \cite[Theorem 2.8]{MV}, we already know the answer to be negative. Nevertheless, the result can be generalized to any order of jump, finding interesting families of stable bundles.

From the description recalled at the beginning of Section \ref{sec-splitting}, we have that a non decomposable rank two vector bundle $\cF$ with even $c_1$ has a curve of jumping lines.
Assume that $\cF$ is normalized and that such curve is a line, eventually with multiplicity. The following result implies that if $\cF$ is stable and it has the described geometric configuration of the jumping locus, then its first Chern class is odd.
\begin{thm}\label{thm-semistable}
 A non decomposable rank two vector bundle $\cF$ such that $c_1(\cF)=0$ and $S_{\mathrm{set}}(\cF)=p^{\vee}$, where $p$ is a point in $\p^2$,
 is either unstable or properly semistable.
\end{thm}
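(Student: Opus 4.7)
My plan is to argue by contradiction. Assume $\cF$ is strictly stable with $c_1(\cF)=0$, $c_2(\cF)=n$, and $S_{\mathrm{set}}(\cF)=p^{\vee}$. Normalising $p=(1:0:0)$, let $a$ denote the linear form on $\check{\p^2}$ cutting out $p^{\vee}$. Strict stability forces $h^0(\cF(-t))=0$ for all $t\ge 0$; since $\rk\cF=2$ and $c_1(\cF)=0$ one has $\cF\simeq\cF^{\vee}$, and Riemann--Roch together with Serre duality then gives $h^1(\cF(-2))=h^1(\cF(-1))=n$ and a perfect pairing $H^1(\cF(-2))\otimes H^1(\cF(-1))\to\C$.

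The central object I would study is the cup-product map
\[
\mu\colon H^1(\cF(-2))\otimes H^0(\sO_{\p^2}(1))\longrightarrow H^1(\cF(-1)),
\]
viewed as a morphism of bundles $M=aM_x+bM_y+cM_z\colon\sO_{\check{\p^2}}(-1)^n\to\sO_{\check{\p^2}}^{\,n}$ on the dual plane (with $(a,b,c)$ dual to $(x,y,z)$). Its degeneracy locus is exactly the Barth divisor $S(\cF)$ of degree $n$, so the hypothesis $S(\cF)=n\cdot p^{\vee}$ forces $\det M=\kappa\,a^n$ for some $\kappa\in\C^{*}$. In parallel, tensoring the Koszul resolution of the regular sequence $(y,z)$ cutting out $p$ with $\cF$ and running a short cohomology chase using $h^0(\cF(-t))=0$ for $0\le t\le 2$ yields the exact sequence
\[
0\longrightarrow H^1(\cF(-2))\xrightarrow{\,(M_y,\,M_z)\,} H^1(\cF(-1))^{2}\longrightarrow H^1(\cF\otimes\mathfrak{m}_p)\longrightarrow 0,
\]
so that in particular $\ker M_y\cap\ker M_z=0$.

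To close the proof I would invoke the self-duality $\cF\simeq\cF^{\vee}$: it makes the bilinear form $H^1(\cF(-2))\otimes H^1(\cF(-2))\to H^0(\sO_{\p^2}(1))^{\vee}$ induced by $\mu$ \emph{symmetric}, the graded commutativity of cup product and the antisymmetry of $\Lambda^2\cF=\sO$ producing cancelling signs. Hence, in compatible bases, each of $M_x,M_y,M_z$ is a symmetric $n\times n$ matrix. Setting $a=0$ in $\det M=\kappa a^n$ then yields $\det(bM_y+cM_z)\equiv 0$ in $\C[b,c]$, so the symmetric pencil $(b,c)\mapsto bM_y+cM_z$ is everywhere singular; the Kronecker--Weierstrass classification of singular symmetric pencils delivers a nonzero common null vector $v\in\ker M_y\cap\ker M_z$, contradicting the injectivity established above.

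The hard step will be the careful bookkeeping needed to justify the symmetry of $M_x,M_y,M_z$ via Serre duality, together with the invocation of the Kronecker--Weierstrass input on singular symmetric pencils; both are classical inputs but deserve tidy handling in this cohomological setting. Once they are in place, no strictly stable $\cF$ can meet the hypotheses, and $\cF$ is therefore either unstable or properly semistable, as claimed.
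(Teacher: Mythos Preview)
Your approach via Barth's net of quadrics is genuinely different from the paper's, which argues geometrically: assuming $\cF$ stable with generic jump $h>0$ along $p^{\vee}$, the paper produces (via the blow-up at $p$) a section of $\cF(n-h)$ vanishing on a scheme $\Gamma$ with $\cI_\Gamma\subset\cI_p^n$, extracts the bound $c_2(\cF)\ge h(2n-h)$, and then bounds the multiplicity of a general jumping line from above by $hn$ using Maruyama's iterated elementary transformations; combining the two forces $h=n$, contradicting stability. Your cohomological route through the Barth map and the symmetry of $M_x,M_y,M_z$ is correct up to that point and is classical (this is exactly Barth's theorem that $S(\cF)$ is the discriminant of a net of quadrics).

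The gap is in the final step. It is \emph{not} true that a singular pencil of symmetric matrices must have a common null vector; the Kronecker--Weierstrass theory only produces a polynomial null vector $v(b,c)$, whose degree (the minimal index) may be strictly positive. A concrete $3\times 3$ counterexample is
\[
M_y=\begin{pmatrix}0&1&0\\1&0&0\\0&0&0\end{pmatrix},\qquad
M_z=\begin{pmatrix}0&0&0\\0&0&1\\0&1&0\end{pmatrix},
\]
for which $\det(bM_y+cM_z)\equiv 0$ while $\ker M_y\cap\ker M_z=\{0\}$ (the kernels are $\langle e_3\rangle$ and $\langle e_1\rangle$ respectively; the pencil is annihilated by the degree-one vector $(c,0,-b)^{t}$). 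So from $\det(bM_y+cM_z)\equiv 0$ alone you cannot conclude the contradiction with your Koszul injectivity.

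What you have not used is the stronger constraint $\det(aM_x+bM_y+cM_z)=\kappa a^n$ with $\kappa\neq 0$ and $M_x$ symmetric; in particular $M_x$ is invertible and the coefficients of $a^1,\dots,a^{n-1}$ in the determinant all vanish. In the $3\times 3$ example above one can check directly that no symmetric $M_x$ with this property exists, so the obstruction is real but comes from the full net, not from the pencil $bM_y+cM_z$ alone. To complete your argument you would need to show, for every $n$, that a symmetric net whose discriminant is the $n$-fold line $a^n=0$ forces $\ker M_y\cap\ker M_z\neq 0$ (equivalently, that all minimal indices of the singular pencil $bM_y+cM_z$ are zero). That statement may well be true, but it is not a consequence of the Kronecker--Weierstrass classification as you invoke it, and it is where the real work lies.
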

\begin{proof}
Assume that $\cF$ is stable.  The splitting type on a general line $l$ through $p$ is 
 $\sO_l(-h)\oplus \sO_l(h)$ with $h>0$. This means that, for a general line $l$ through $p$ we have $\hh^0(\cF_{|l}(-h))=1$.
 Thanks to this fact, we can construct a special non zero section of $\cF(n-h)$ for some $n\ge h$ in the following way. 
Let us consider
the following diagram, constructed blowing up the point $p$ in $\p^2$
$$
\xymatrix{
\tilde{\p}^2 \ar[d]_{\tilde p} \ar[r]^{\tilde q} & {p}^\vee\\
\p^2
}
$$
Because of the possible splitting types, we get that $\tilde{q}_*\tilde{p}^*(\cF(-h))$ is an invertible sheaf 
on $p^\vee$, that is $\sO_{p^\vee}(-n)$ with $n> h$ thanks to the stability of $\cF$. This gives a non zero map
$$ \tilde{q}^*\sO_{p^\vee} \longrightarrow  \tilde{p}^*(\cF(-h))\otimes \tilde{q}^*\sO_{p^\vee}(n).$$
Remind that $\tilde{p}_*\tilde{q}^*\sO_{p^\vee}(n)=\cI_p^n(n)$, then taking the direct image on $\p^2$, we obtain a short exact sequence
 \begin{equation}\label{stable-firststep} 
 \begin{CD}
  0@>>> \sO_{\p^2} @>>> \cF(n-h) @>>> \cI_{\Gamma}(2n-2h+c_1) @>>> 0.
 \end{CD}
\end{equation}
with $\cI_{\Gamma}\subset \cI_p^n$. 
This gives some numerical conditions. The length of $\Gamma$ is $c_2(\cF(n-h))=c_2(\cF)+(n-h)^2$ and this length is greater or equal than $n^2$ by construction.
 Indeed, locally at $p=(1:0:0)$, the zero set $\Gamma$ is a 
complete intersection defined by two polynomials in $\oplus_{k\ge 0}\HH^0(\cI_p^n(n+k))$, in particular its length is at least $n^2$. 
 This means that we have 
$$c_2(\cF)\ge h(2n-h).$$
Moreover, since $\cF$ is stable, $c_2(\cF)$ is the degree of the curve $S(\cF)$ which means that, if $f=0$ is the linear form defining $p^{\vee}$ in $\check{\p^2}$ then 
$S(\cF)$ is defined by $f^{c_2(\cF)}=0$.

Let $l$ be a general jumping line.\\ 
Using the method implemented by Maruyama in \cite{M} to determine the multiplicity of the singular point $l$
of the curve of jumping lines of $\cF$, we consider an elementary transformation of $\cF$, given by the jumping line, which induces the following diagram:
\begin{equation}\label{Maruyama-diag}
\xymatrix{
& & 0 \ar[d] & 0\ar[d]\\
0 \ar[r] & \sO_{\p^2}(h-n) \ar[r] \ar[d]_{\simeq} & \cF_1 \ar[r] \ar[d] & \cI_{\Gamma_1}(n-h-1) \ar[d] \ar[r] & 0 \\
0 \ar[r] & \sO_{\p^2}(h-n) \ar[r] & \cF \ar[r] \ar[d] & \cI_{\Gamma}(n-h) \ar[d] \ar[r] & 0 \\
& & \sO_l(-h) \ar[d] \ar[r]^{\simeq} & \sO_l(-h) \ar[d]\\
& & 0 & 0
}
\end{equation}
Recall that Maruyama's method states that, denoting $(\cF_i)_{|l}\simeq \sO_l(a_i) \oplus \sO_l(b_i)$ with $a_i \geq b_i$, 
the multiplicity is computed as $\mult_{l^\lor}S(\sF) = \sum_{j=0}^k a_i$, where $a_k$ is the first integer in the decreasing sequence 
$\{a_i\}_{i\geq 0}$ with $a_k\leq0$. Iterating the previous diagram, using subsequent elementary transformations, 
we get that $\mult_{l^\lor}S(\sF)\leq hn$. Combining the obtained inequalities, we have that $h(2n-h)\le nh$. The only possibility is $h=n$ which proves that $\cF$ is semistable but not stable.

The main ingredient to prove the last inequality is to look at the local description of the previous diagram. Locally at the point $p=(1:0:0)$, $\Gamma$ is defined by two non homogeneous polynomials $(f,g)$, that we describe in terms of their homogeneous components
$$
f = \sum_{k=n}^{\deg(f)} f_k, \:\:\: g = \sum_{k=m}^{\deg(g)} g_k.
$$ 
Observe that the lowest degree of the homogeneous components must be $n$ for one of the two defining polynomials (which we suppose to be $f$) and greater or equal than $n$ for the other one (in our case $m\geq n$). Else, we would have that $\Gamma$ contains the fat point defined by a power of $I_p$ greater then $n$, which is impossible. We will mainly focus on the $n$-th homogeneous component $f_n$ of $f$, which we describe as
$$
f_n = \sum_{i+j=n} \alpha_{i,j}y^i z^j.
$$
Because of the hypothesis on the splitting type for the generic jumping line, we can consider a generic change of coordinates of $\mathbb{P}^2$, which fixes the point $p$, that allows us to suppose that all the coefficients $\alpha_{i,j}$ are non zero and to consider, as generic jumping line, the one defined by $y=0$.

This implies that, because of Diagram \ref{Maruyama-diag}, the polynomial 
$$
f^{(1)} = \frac{f - \left(\alpha_{0,n}z^n + \sum_{t>n} \beta_ t z^t\right)}{y}, 
$$
with $\beta_t$ non zero for a finite number of values of $t$, belongs to the ideal defining $\Gamma_1$. In particular, its homogeneous part of degree $n-1$ comes from $f_n$ and henceforth all the possible monomials $y^i z^j$, in this case with $i+j=n-1$, appear with non zero coefficient. Therefore, we have that 
$$
(\cF_1)_{|l} \simeq  \sO_l(-h+s_1)\oplus \sO_l(h-1-s_1), \:\: \mbox{ with } s_1 \geq 0. 
$$
Observe that the integer $s_1$ appears because, depending on the homogeneous parts of higher degree in the considered polynomials, we could get a lower order, at the point $p$, in the intersection of the line $l$ with $\Gamma_1$.

Iterating the previous process, we have that at the $k$-th step we must have
$$
(\cF_k)_{|l} \simeq  \sO_l(-h+s_k)\oplus \sO_l(h-k-s_k), \:\: \mbox{ with } s_k \geq 0. 
$$
and the iteration end at most at the $(n-1)$-st step. Indeed, we obtain $f^{(n-1)} = z + \cdots$, which gives a splitting for $\cF_{n-1}$ with both degree less or equal than zero.

\end{proof}

We conclude this section providing two infinite and explicit families of vector bundles which are almost uniform, i.e. their jumping loci are given by all the lines passing through a fixed point, but are not almost homogeneous, due to Theorem \ref{thm-invariant-t}. Notice that all the stable bundles described in the second family have odd first Chern class, as implied by the previous result.

\begin{eg}\label{characterization-by-jl-1}[Properly semistable and unstable bundles]
Let $p$ be the point $p=(0:0:1)$, $k\in \N$ be a positive integer, $c_1=\{-1,0\}$ and $f(x,y,z)$ a  $(2r+2k-c_1)$-form such that $f(p)\neq 0$. Then the sheaf $\cF$ defined by 
$$
\begin{CD}
 0@>>>  \sO_{\p^2}(-2r-k+c_1) @>(f,x^r,y^r)>> \sO_{\p^2}(k) \oplus \sO_{\p^2}(-r-k+c_1)^2@>>> \cF @>>> 0
\end{CD}
$$
is a vector bundle. Its first Chern class is $c_1(\cF)=c_1$, it is properly semistable if $c_1=0$ and $k=0$ but unstable if not (that is if $c_1=-1$ or $k>0$).
According to its definition there is an exact sequence 
$$
0 \longrightarrow \sO_{\p^2}(k) \longrightarrow \cF \longrightarrow \mathcal{I}_\Gamma(-k+c_1)\longrightarrow 0,
$$
where $\Gamma$ is the complete intersection $(x^r,y^r)$. Observe that this vector bundle is not invariant under the action of $G_p$ when $r>1$.

\smallskip

Any line $L$ through $p$ intersects $\Gamma$ along a subscheme of length $r$ which implies 
$$ \cF\otimes \sO_L=\sO_{L}(k+r) \oplus \sO_L(-k-r+c_1).$$
On the contrary the splitting type of $\cF$ on a line $l$ that does not meet $\Gamma$ is 
$$ \cF\otimes \sO_L=\sO_{L}(k) \oplus \sO_L(-k+c_1).$$
This shows that $S(\cF)=\{L, L\ni p\}$ and each jumping line is of order $r$.
\end{eg}

\begin{eg}\label{characterization-by-jl-2}[Stable bundles]
Let $p$ be the point $p=(0:0:1)$,  $f(x,y,z)$ be a $(2r+1)$-form such that $f(p)\neq 0$. Then the sheaf $\cF$ defined by 
$$
\begin{CD}
 0@>>>  \sO_{\p^2}(-2r-2) @>(f,x^{r+1},y^{r+1})>> \sO_{\p^2}(-1) \oplus \sO_{\p^2}(-r-1)^2@>>> \cF @>>> 0
\end{CD}
$$
is a stable vector bundle with Chern classes  $c_1(\cF)=-1$ and $c_2(\cF)= (r+1)^2$.
According to its definition there is an exact sequence 
$$
0 \longrightarrow \sO_{\p^2}(-1) \longrightarrow \cF \longrightarrow \mathcal{I}_\Gamma \longrightarrow 0,
$$
where $\Gamma$ is the complete intersection $(x^{r+1},y^{r+1})$.  Observe that  this vector bundle is not invariant under the action of $G_p$ when $r\ge 1$.

\smallskip

Any line $L$ through $p$ intersects $\Gamma$ along a subscheme of lentgh $r+1$ which implies 
$$ \cF\otimes \sO_L=\sO_{L}(r) \oplus \sO_L(-r-1).$$
On the contrary the splitting type of $\cF$ on a line $l$ that does not meet $\Gamma$ is 
$$ \cF\otimes \sO_L=\sO_{L}\oplus \sO_L(-1).$$
This shows that $S(\cF)=\{L, L\ni p\}$ and each jumping line is of order $r$.
\end{eg}

\bibliographystyle{siam}
\bibliography{MV-Invariant.bib}

\end{document}